\DeclareMathOperator{\diag}{diag}
\DeclareMathOperator{\tr}{tr}
\DeclareMathOperator{\Cof}{Cof} %cofactor!
\numberwithin{equation}{section}
\theoremstyle{plain} %\numberwithin{equation}{section}
\newtheorem{theorem}{Theorem}[section]
\newtheorem{lemma}[theorem]{Lemma}
\theoremstyle{definition}
\newtheorem{remark}[theorem]{Remark}
  \renewenvironment{thebibliography}[1]{%
   \begin{oldthebibliography}{#1}%
    \setlength{\parskip}{0.2ex}%
     \setlength{\itemsep}{0.0in}%
  }%
  {%
    \end{oldthebibliography}%
  }
\begin{document}

%\title{\bf\large Multiple Roots Phenomena in Maximum Likelihood Estimation for Factor Analysis}
\title{\bf\large The Multiple Roots Phenomenon in Maximum Likelihood Estimation for Factor Analysis}

\author{
Elizabeth Gross\footnote{Department of Mathematics, San Jos\'e State University, \textit{elizabeth.gross@sjsu.edu}}, %San Jos\'e, CA 95192, U.S.A.} ,
%; E-mail: \textit{elizabeth.gross@sjsu.edu}} , 
Sonja Petrovi\'c\thanks{Department of Applied Mathematics, Illinois Institute of Technology, \textit{sonja.petrovic@iit.edu}}, % Chicago, IL 60616, U.S.A.} ,
%; E-mail: \textit{sonja.petrovic@iit.edu}} ,  
Donald Richards\thanks{Department of Statistics, Pennsylvania State University, \textit{richards@stat.psu.edu}}, %University Park, PA 16802, U.S.A.} , 
%; E-mail: \textit{richards@stat.psu.edu}}
Despina Stasi\thanks{Department of Applied Mathematics, Illinois Institute of Technology, \textit{stasdes@iit.edu}}%Chicago, IL 60616, U.S.A.
%; E-mail: \textit{stasdes@iit.edu} ,
%\endgraf
%\ $^\dag$Corresponding author; e-mail address: \textit{sonja.petrovic@iit.edu}}
}
\bigskip

%\date{}
%\today

\maketitle

\begin{abstract}
Multiple root estimation problems in statistical inference arise in many contexts in the literature.  In the context of maximum likelihood estimation, the existence of multiple roots causes uncertainty in the computation of maximum likelihood estimators using hill-climbing algorithms, and consequent difficulties in the resulting statistical inference.  

In this paper, we study the multiple roots phenomenon in maximum likelihood estimation for factor analysis.  We prove that the corresponding likelihood equations have uncountably many feasible solutions even in the simplest cases.  For the case in which the observed data are two-dimensional and the unobserved factor scores are one-dimensional, we prove that the solutions to the likelihood equations form a one-dimensional real curve. 
\end{abstract}

\section{Introduction}
\label{introduction}

Systems of equations which have multiple roots, such as critical equations in estimation problems, arise in many contexts in statistical inference and there  is now a large literature on these problems. %in statistics.  %An extensive account of this literature is beyond the scope of the present paper, so we
We refer to Small, et al.~\cite{Smalletal} for an
extensive
 account of the history and methods for investigating numerous multiple roots problems; Buot, et al.~\cite{BuotRichards,Buotetal} for enumeration of the roots of likelihood equations arising from mixture models or from the Behrens-Fisher problem;  Catanese, et al.~\cite{Cataneseetal} for algebraic aspects of some of these problems and Gross, et al.~\cite{Grossetal} for applications to variance component models; Hoferkamp and Peddada \cite{HoferkampPeddada} for order-restricted inference; and Zwiernik, et al.~\cite{Zwierniketal} for aspects of linear Gaussian covariance models. 
%; and Petrovi\'c, et al. \cite{Petrovicetal} for work on random graph models.  
%The many references appearing in these cited articles will also provide the interested reader with other lines of investigation.  

In the context of estimation by the method of maximum likelihood, the existence of multiple roots for the likelihood equations causes uncertainty in the computation of maximum likelihood estimators using hill-climbing algorithms.  Consequently, the resulting statistical inference faces the deeper difficulty that it is necessary to determine which of the multiple roots produces the maximum value of the likelihood function.  

It is well-known that maximum likelihood estimators (MLEs) enjoy many appealing asymptotic properties.  Subject to various regularity conditions and as the sample size tends to infinity, the MLE of a parameter converges in probability to the true value of the parameter, a property known as {\it asymptotic consistency}.  MLEs also are {\it asymptotically efficient}: The variance of the MLE converges to the minimal value among all consistent estimators.  Moreover, the asymptotic distribution of MLEs are known, and this enables standard statistical procedures, such as the construction of confidence regions and tests of hypotheses.  

Despite the large-sample advantages of MLEs, there are instances in which large samples are unavailable, e.g., in the statistical analysis of clinical trials data.  On the other hand, from a purely inferential perspective, one must bear in mind that, as the sample size increases, the likelihood of rejecting the null hypothesis increases simply as an artifact of said increasing sample size.  Consequently, it is important to investigate maximum likelihood estimation problems in which sample sizes are small.  
%, known colloquially as the {\it non-asymptotic} or {\it finite-sample regime}.  

In this paper, we study the multiple roots phenomenon in maximum likelihood estimation for the problem of factor analysis; cf., Anderson \cite{Anderson}, Harman \cite{Harman}.  The factor analysis problem is old and venerable, with many applications to the social sciences, and its difficult mathematical nature is well-known; cf., Adachi \cite{Adachi}, Anderson and Rubin \cite{AndersonRubin}, and Rubin and Thayer \cite{RubinThayer}.  Starting with the formulation of the factor analysis model in \cite{RubinThayer}, we prove that the corresponding likelihood equations have uncountably many feasible solutions even in the simplest cases.  For the case in which the observed data are two-dimensional and the unobserved factor scores are one-dimensional, we prove that the solutions to the likelihood equations form a one-dimensional {\it real} curve.  

%\newpage

\section{A model for factor analysis}
\label{factoranalysis}

Rubin and Thayer \cite{RubinThayer} considered a problem in factor analysis, as follows.  Let $Y$ denote a $n \times p$ matrix of observed data. Let $Z$ be a $n \times q$ matrix of unobserved factor scores, where $q < p$, and each row of $Z$ is assumed to be marginally distributed as $N_q(0,R)$, a $q$-dimensional multivariate normal distribution with mean vector $0$ and $q \times q$ {\it correlation} matrix $R = (r_{ij})$, i.e., $r_{jj} = 1$ for all $j=1,\ldots,q$.  It also is assumed that the rows of the matrix $(Y,Z)$ are mutually independent and identically distributed.  

For $i=1,\ldots,n$, denote by $Y_i$ and $Z_i$ the $i$th rows of $Y$ and $Z$, respectively.  We suppose that the conditional distribution of $Y_i$, given $Z$ is given by \mbox{$Y_i|Z \sim N_p(\alpha+Z_i\beta,\tau^2)$,} where $\tau^2 = \diag(\tau_1^2,\ldots,\tau_p^2)$ is a diagonal matrix and $\beta = (\beta_{ij})$ is a $q \times p$ matrix called the {\it factor-loading matrix}.  By centering the matrix $Y$ it can be assumed, with no loss of generality that $\alpha = 0$.  Then the problem of factor analysis is to calculate the maximum likelihood estimators of the parameters $\beta$, $\tau^2$, and $R$.  

It was noted in \cite[p.~75]{RubinThayer} that the likelihood function for this factor analysis model generally has multiple local maxima.  The issue of the number of critical points, that is the number of roots of the likelihood equations,  was left open, however, and this is the focus of the present paper.

\section{The derivatives of the likelihood function}
\label{derivatives}

Following \cite{RubinThayer}, we write the log-likelihood function in the form 
$$
\ell(\tau^2,\beta,R) = -\tfrac12 n \, f(\tau^2,\beta,R)
$$
with $C_{yy} = (c_{ij})$ denoting the sample covariance matrix constructed from $Y$, and
\begin{equation}
\label{eq: loglikelihood}
f(\tau^2,\beta,R) = \log \det(\tau^2 + \beta'R\beta) + \tr C_{yy}(\tau^2 + \beta'R\beta)^{-1}.
\end{equation}
Thus, we wish to {\it minimize} the function $f$ with respect to the parameters $(\beta,\tau^2,R)$.  

To calculate the derivatives of $f$ with respect to $\tau$ and $\beta$, we will apply repeatedly the following result provided by Magnus and Neudecker \cite[p. 169 (Theorem 1) and p. 171 (Theorem 3)]{MagnusNeudecker}.  

\begin{lemma}
Suppose that the square matrix $A(t)$ is a differentiable function of a real parameter $t$.  Then 
\begin{equation}
\label{derivativelogdet}
\frac{\partial}{\partial t} \log\det(A(t)) = \tr \Big[A(t)^{-1} \frac{\partial}{\partial t} A(t)\Big]
\end{equation}
and 
\begin{equation}
\label{derivativeinvdet}
\frac{\partial}{\partial t} A(t)^{-1} = -A(t)^{-1}\Big[\frac{\partial}{\partial t} A(t)\Big]A(t)^{-1}.
\end{equation}
\end{lemma}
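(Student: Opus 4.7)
The plan is to prove the two identities separately. Both are standard facts of matrix calculus, and \eqref{derivativeinvdet} follows in one line from the product rule; the real work is in \eqref{derivativelogdet}.

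For \eqref{derivativelogdet}, I would first reduce to Jacobi's formula by the ordinary chain rule: $\frac{\partial}{\partial t}\log\det A(t) = (\det A(t))^{-1} \cdot \frac{\partial}{\partial t}\det A(t)$. For the derivative of $\det A(t)$ itself, the key observation is that the Laplace expansion along row $i$, $\det A = \sum_{j} a_{ij}\Cof(A)_{ij}$, shows that $\partial \det A/\partial a_{ij} = \Cof(A)_{ij}$, because the cofactor $\Cof(A)_{ij}$ is a polynomial in the entries of $A$ outside row $i$ and column $j$ and therefore does not involve $a_{ij}$. Applying the multivariable chain rule to $\det A(t)$ then yields $\frac{\partial}{\partial t}\det A(t) = \tr\bigl(\Cof(A(t))^\top \tfrac{\partial}{\partial t} A(t)\bigr)$. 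Substituting the adjugate identity $\Cof(A)^\top = \det(A)\cdot A^{-1}$, valid wherever $A(t)$ is invertible, and cancelling the factor $\det A(t)$ produces \eqref{derivativelogdet}.

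For \eqref{derivativeinvdet}, I would differentiate the defining relation $A(t)\,A(t)^{-1} = I$ using the product rule, giving $\bigl(\tfrac{\partial}{\partial t}A(t)\bigr)A(t)^{-1} + A(t)\bigl(\tfrac{\partial}{\partial t}A(t)^{-1}\bigr) = 0$, and then left-multiply by $A(t)^{-1}$ and rearrange.

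There is no real obstacle; the argument is routine. The only point worth flagging is that both identities presuppose invertibility of $A(t)$ on an open neighborhood of the point of differentiation, and \eqref{derivativeinvdet} additionally requires the differentiability of $t \mapsto A(t)^{-1}$. The latter follows from Cramer's rule: each entry of $A(t)^{-1}$ is a rational function of the entries of $A(t)$ with nonvanishing denominator $\det A(t)$, so differentiability passes from $A$ to $A^{-1}$ wherever $A(t)$ is invertible.
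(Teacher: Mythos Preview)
Your argument is correct and is the standard derivation of Jacobi's formula together with the one-line product-rule proof of the inverse identity. There is nothing to compare it against, however: the paper does not prove this lemma at all. It simply states the two identities and attributes them to Magnus and Neudecker \cite[p.~169 (Theorem~1) and p.~171 (Theorem~3)]{MagnusNeudecker}, then proceeds to use them. So your proposal supplies a self-contained proof where the paper relies on a citation; what you have written is exactly the kind of elementary justification one would give if asked to fill in the reference.
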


\bigskip

Denote by $E_{kk}$ the $p \times p$ matrix with entry $1$ in the $(k,k)$th position and zeros elsewhere.  It is straightforward to verify that 
$$
\frac{\partial}{\partial \tau_k} (\tau^2 + \beta'R\beta) = 2\tau_k E_{kk},
$$
and then it follows from formula \eqref{derivativelogdet}  %(\ref{derivativelogdet}) 
that 
\begin{align*}
\frac{\partial}{\partial \tau_k} \log\det(\tau^2 + \beta'R\beta) &= 2\tau_k\tr[(\tau^2 + \beta'R\beta)^{-1}E_{kk}] \\
&= 2\tau_k \det(\tau^2+\beta'R\beta)^{-1} \, \Cof_{kk}(\tau^2+\beta'R\beta),
\end{align*}
where $\Cof_{ij}(A)$ is the $(i,j)$th cofactor of the matrix $A$.  

Next, we apply the formula (\ref{derivativeinvdet}) to obtain 
\begin{align*}
\frac{\partial}{\partial \tau_k} \tr C_{yy}(\tau^2 + \beta'R\beta)^{-1} &= \tr [C_{yy} \frac{\partial}{\partial \tau_k} (\tau^2 + \beta'R\beta)^{-1} \\
&= -2\tau_k \tr C_{yy}(\tau^2 + \beta'R\beta)^{-1} E_{kk} (\tau^2 + \beta'R\beta)^{-1}.
\end{align*}
This latter expression seems formidable initially, but it can be rewritten in terms of the entries of $C_{yy}$ and the cofactors of $\tau^2+\beta'R\beta$, as follows:  Since 
$$
(\tau^2 + \beta'R\beta)^{-1} = \det(\tau^2 + \beta'R\beta)^{-1} \Big(\Cof_{ij}(\tau^2 + \beta'R\beta)\Big)
$$
then 
\begin{align*}
(\tau^2 + \beta'R&\beta)^{-1}E_{kk}(\tau^2 + \beta'R\beta)^{-1} \\
&= \det(\tau^2 + \beta'R\beta)^{-2} \Big(\Cof_{ij}(\tau^2 + \beta'R\beta)\Big)E_{kk}\Big(\Cof_{ij}(\tau^2 + \beta'R\beta)\Big) \\
&= \det(\tau^2 + \beta'R\beta)^{-2} \Big(\Cof_{ik}(\tau^2 + \beta'R\beta) \cdot \Cof_{kj}(\tau^2 + \beta'R\beta)\Big).
\end{align*}
Therefore, 
\begin{multline*}
\tr C_{yy}(\tau^2 + \beta'R\beta)^{-1} E_{kk} (\tau^2 + \beta'R\beta)^{-1} \\
= \det(\tau^2 + \beta'R\beta)^{-2} \sum_{i,j=1}^p c_{ij} \Cof_{jk}(\tau^2 + \beta'R\beta) \cdot \Cof_{ki}(\tau^2 + \beta'R\beta).
\end{multline*}
Collecting together the formulas above, we obtain 
\begin{align*}
\frac{\partial}{\partial \tau_k} f(\tau^2,\beta,R) &= 2\tau_k \det(\tau^2+\beta'R\beta)^{-1} \, \Cof_{kk}(\tau^2+\beta'R\beta) \\
& \quad 
-2\tau_k \det(\tau^2 + \beta'R\beta)^{-2} \sum_{i,j=1}^p c_{ij} \Cof_{jk}(\tau^2 + \beta'R\beta) \cdot \Cof_{ki}(\tau^2 + \beta'R\beta).
\end{align*}

Next, we calculate the derivative of $f(\tau^2,\beta,R)$ with respect to $\beta$.  Let $E_{kl}$ be a $q \times p$ matrix with entry $1$ in the $(k,l)$th position and zeros elsewhere; that is, the $(i,j)$th entry of $E_{kl}$ is $\delta_{ik}\delta_{jl}$, where $\delta_{ij}$ denotes Kronecker's delta.  Then 
\begin{align*}
\frac{\partial}{\partial\beta_{kl}} (\beta'R\beta) &= (\frac{\partial}{\partial\beta_{kl}} \beta)'R\beta + \beta'R\frac{\partial}{\partial\beta_{kl}} \beta \\
&= E_{kl}'R\beta + \beta'RE_{kl}.
\end{align*}
Since 
\begin{align*}
E_{kl}'R\beta = (\delta_{ik}\delta_{jl})'R\beta &= (\delta_{il}\delta_{jk})R\beta \\
&= \Big(\sum_{m=1}^q \delta_{il}\delta_{mk}(R\beta)_{mj}\Big) = \big(\delta_{il}(R\beta)_{kj}\big),
\end{align*}
then 
$$
\beta'RE_{kl} = (E_{kl}'R\beta)' = \big(\delta_{jl}(R\beta)_{ki}\big),
$$
and hence 
$$
\frac{\partial}{\partial\beta_{kl}} (\beta'R\beta) = \big(\delta_{il}(R\beta)_{kj} + \delta_{jl}(R\beta)_{ki}\big).
$$
Therefore, by formula (\ref{derivativelogdet}), 
\begin{align*}
\frac{\partial}{\partial\beta_{kl}} \log&\det(\tau^2 + \beta'R\beta) \\
&= \tr \Big[(\tau^2 + \beta'R\beta)^{-1} \frac{\partial}{\partial\beta_{kl}} (\tau^2 + \beta'R\beta)\Big] \\
%&= \tr \Big[(\tau^2 + \beta'R\beta)^{-1} \frac{\partial}{\partial\beta_{kl}} \beta'R\beta \Big] \\
&= \tr \Big[(\tau^2 + \beta'R\beta)^{-1} \big(\delta_{il}(R\beta)_{kj} + \delta_{jl}(R\beta)_{ki}\big) \Big] \\
&= \det(\tau^2 + \beta'R\beta)^{-1} \cdot \sum_{i,j=1}^p \Cof_{ij}(\tau^2 + \beta'R\beta) \cdot (\delta_{jl}(R\beta)_{ki} + \delta_{il}(R\beta)_{kj}).
\end{align*}

Next, by formula (\ref{derivativeinvdet}), 
\begin{align*}
\frac{\partial}{\partial\beta_{kl}} (\tau^2 + \beta'R\beta)^{-1} &= -(\tau^2 + \beta'R\beta)^{-1} \Big[\frac{\partial}{\partial\beta_{kl}}(\tau^2 + \beta'R\beta)\Big] (\tau^2 + \beta'R\beta)^{-1} \\
&= -(\tau^2 + \beta'R\beta)^{-1} \big(\delta_{il}(R\beta)_{kj} + \delta_{jl}(R\beta)_{ki}\big) (\tau^2 + \beta'R\beta)^{-1},
\end{align*}
Therefore, 
$$
\frac{\partial}{\partial\beta_{kl}} \tr C_{yy}(\tau^2 + \beta'R\beta)^{-1} = -\tr C_{yy}(\tau^2 + \beta'R\beta)^{-1} \big(\delta_{il}(R\beta)_{kj} + \delta_{jl}(R\beta)_{ki}\big) (\tau^2 + \beta'R\beta)^{-1}.
$$
%We can also simplify this by expressing $(\tau^2 + \beta'R\beta)^{-1}$ in terms of cofactors, but I am not sure that this reduction will be of much help.  
Consequently, 
\begin{align*}
\frac{\partial}{\partial\beta_{kl}} f(\tau^2,\beta,R) &= \det(\tau^2 + \beta'R\beta)^{-1} \cdot \sum_{i,j=1}^p \Cof_{ij}(\tau^2 + \beta'R\beta) \cdot (\delta_{jl}(R\beta)_{ki} + \delta_{il}(R\beta)_{kj}) \\
& \qquad
-\tr C_{yy}(\tau^2 + \beta'R\beta)^{-1} \big(\delta_{il}(R\beta)_{kj} + \delta_{jl}(R\beta)_{ki}\big) (\tau^2 + \beta'R\beta)^{-1}.
\end{align*}

Finally, we calculate the derivatives with respect to the parameters $r_{kl}$, $k < l$.  On writing 
$$
\beta'R\beta = \big(\sum_{m,n=1}^q \beta_{mi}r_{mn}\beta_{nj}\big) = \big(\sum_{m=1}^q \beta_{mi}\beta_{mj} + 2\sum_{1\le m<n\le q} r_{mn}\beta_{mi}\beta_{nj}\big),
$$
we obtain 
$$
\frac{\partial}{\partial r_{kl}} \beta'R\beta = 2r_{kl}\big(\beta_{ki}\beta_{lj}\big)
$$
where, for fixed $(k,l)$, $(\beta_{ki}\beta_{lj})$ denotes the matrix with generic $(i,j)$th entry $\beta_{ki}\beta_{lj}$.  Consequently, 
\begin{align*}
\frac{\partial}{\partial r_{kl}} \log\det(\tau^2 + \beta'R\beta) &= \tr\big[(\tau^2 + \beta'R\beta)^{-1}\frac{\partial}{\partial r_{kl}}(\tau^2 + \beta'R\beta)\big] \\
&= 2r_{kl}\tr\big[(\tau^2 + \beta'R\beta)^{-1}\big(\beta_{ki}\beta_{lj}\big)\big].
\end{align*}
%Again, we can write this, if necessary, in terms of the cofactors of $\tau^2 + \beta'R\beta$.  

Next, 
\begin{align*}
\frac{\partial}{\partial r_{kl}} (\tau^2 + \beta'R\beta)^{-1} &= -(\tau^2 + \beta'R\beta)^{-1} \Big[\frac{\partial}{\partial r_{kl}}(\tau^2 + \beta'R\beta)\Big](\tau^2 + \beta'R\beta)^{-1} \\
&= -2r_{kl}(\tau^2 + \beta'R\beta)^{-1} \big(\beta_{ki}\beta_{lj}\big) (\tau^2 + \beta'R\beta)^{-1},
\end{align*}
and therefore 
\begin{align*}
\frac{\partial}{\partial r_{kl}} f(\tau^2,\beta,R) &= 2r_{kl}\tr\big[(\tau^2 + \beta'R\beta)^{-1}\big(\beta_{ki}\beta_{lj}\big)\big] \\
&\qquad 
-2r_{kl}\tr C_{yy}(\tau^2 + \beta'R\beta)^{-1} \big(\beta_{ki}\beta_{lj}\big) (\tau^2 + \beta'R\beta)^{-1} \\
&= 2r_{kl} \tr \Big[(\tau^2 + \beta'R\beta)^{-1}\big(\beta_{ki}\beta_{lj}\big)\big(I_p%I_q
 -(\tau^2 + \beta'R\beta)^{-1}C_{yy}\big)\Big].
\end{align*}
%This concludes our calculation of the derivatives of the log-likelihood function.  

\section{The solutions of the likelihood equations}
\label{solutionset}

The likelihood equations derived in the previous section are rational equations whose denominators are $(\det(\tau^2 + \beta'R\beta))^2$.  In order to work with polynomial equations, we will substitute $\gamma$ for $(\det(\tau^2 + \beta'R\beta))^{-1}$ in the likelihood equations, add to the system of equations the new equation 
$$
\gamma \det(\tau^2 + \beta'R\beta)-1=0,
$$
and denote by $\mathbf f_{p,q,C_{yy}}=0$ the resulting expanded system of equations.  

The set $V_{p,q,C_{yy}} \subset \mathbb R^{p + pq + q(q-1)/2 + 1}$ of real solutions to the system $\mathbf f_{p,q, C_{yy}} = 0$ is equal to the set 
\begin{equation}
\begin{split}
V_{p,q, C_{yy}} = \bigg\{(\tau, \beta, R, \gamma) \ : \ (\tau, \beta, R) \text{ is a critical point of } f(\tau^2, \beta, R),  \\ \det(\tau^2 + \beta'R\beta) \neq 0, \text{ and } \gamma=\frac{1}{\det(\tau^2 + \beta'R\beta)}\bigg\}.
\end{split}
\end{equation}

We now set $p=2$ and $q=1$.  Let us consider the polynomials in $\mathbf f_{2,1, C_{yy}}$ and treat the entries of $C_{yy} = (c_{ij})$ as unknown variables.  Furthermore, for simplification, we make the substitution, 
\begin{equation}
\label{eq:xvariables}
\left(\begin{array}{cc}x_{11} & x_{12} \\x_{12} & x_{22}\end{array}\right) = \tau^2 + \beta'R\beta,
\end{equation} 
and add to $\mathbf f_{2,1,C_{yy}}$ the equations from \eqref{eq:xvariables}.  This new set of polynomials defines an ideal $I_{2,1} \subseteq \mathbb Q[ c_{11}, c_{12}, c_{22}, \tau_1, \tau_2, \beta_{11}, \beta_{12}, \gamma, x_{11}, x_{12}, x_{22}]$.  
As we are interested in the solution set associated with these polynomials,  we study the variety $V(I_{2,1}) \subseteq \mathbb C^{11}$ defined to the set of all points $p=(c_{11}, c_{12}, c_{22}, \tau_1, \tau_2, \beta_{11}, \beta_{12}, \gamma, x_{11}, x_{12}, x_{22}) \in \mathbb C^{11}$ such that 
%\textcolor{red}{changed the polynomial name to $g$, since $f$ is reserved for our main function.}
%$f(p) = 0$ for all polynomials $f \in I_{2,1}$.
$g(p) = 0$ for all polynomials $g \in I_{2,1}$.
%defined as the Zariski closure of all solutions to the system. 
To compute the dimension of the set $V(I_{2,1})$, we first decompose it into a union of irreducible varieties. The corresponding algebraic object is the intersection of the primary ideals. For an introduction to ideals, varieties, and primary decompositions see \cite{Coxetal}.

Using  {\tt Macaulay2} (Grayson and Stillman \cite{GraysonStillman}), we obtain the decomposition $I_{2,1}=J_1 \cap J_2$ of $I_{2,1}$ into an intersection of primary ideals $J_1$ and $J_2$, where 
\begin{align*}
J_1 &= \langle x_{22}-c_{22}, x_{12}-c_{12}, x_{11}-c_{11}, \beta_{11}\beta_{12}-c_{12}, \tau_2^2+\beta_{12}^2-c_{22}, \tau_1^2+\beta_{11}^2-c_{11}, \\
& \qquad\qquad \gamma c_{12}^2-\gamma c_{11}c_{22}+1 \rangle \\
\intertext{and} 
J_2 &= \langle x_{22}-c_{22}, x_{12}, x_{11}-c_{11}, \beta_{12}, \beta_{11}, \tau_2^2-c_{22}, \tau_1^2-c_{11}, \gamma c_{11}c_{22}-1\rangle.
\end{align*}
Moreover, we obtain from {\tt Macaulay2} that the dimensions of the components $V(J_1)$  and $V(J_2)$ corresponding to the ideals $J_1$ and $J_2$ are 4 and 3, respectively. These computations can be confirmed using numerical algebraic geometry software such as  {\tt PHCpack} (Verschelde \cite{Verschelde}).
%\footnote{\textcolor{red}{We need to explain what $V(I)$ means}} -- DONE
  Since there are no relations between the variables $c_{11}$, $c_{12}$, and $c_{22}$, in other words, they are free variables, it follows that if we choose $C_{yy}$ generically from the set of $p \times p$ positive definite matrices then the solution set $V_{2,1,C_{yy}}$ contains a curve (i.e., a one-dimensional component) and a set of points (i.e., a zero-dimensional component).%\footnote{\textcolor{red}{Should we provide for statisticians a reference to explain why there has to be a a one- and a two-dimensional component?  Or is it so obvious that ...?}}

Moreover, the curve in $V_{2,1,C_{yy}}$ can be described parametrically in terms of $t$ for all values of $t$ such that $c_{12}^2/c_{22} \leq  t^2 \leq c_{11}$: 
\begin{alignat}{4}
  &\beta_{11} &&= t             \qquad\qquad && \beta_{12} &&= \frac{c_{12}}{t} \label{eq:onedim1} \\
  &\tau_1^2   &&= c_{11} - t^2  \qquad\qquad && \tau_2^2 &&= c_{22} - \frac{c_{12}^2}{t^2} \label{eq:onedim2}
\end{alignat}
Since $C_{yy}$ is positive definite then the interval $\{t \in \mathbb{R}: \ c_{12}^2/c_{22} \leq t^2 \leq c_{11}\}$ has positive measure.
%\footnote{The following comments were inserted/deleted by Sonja (I think) --not sure--: {\textcolor{red}{``Point out again the positive-dimensional intersection of the solution set with the cone of positive semidefinite matrices in words. Maybe in statement of proposition.?''}} \hfill\break
%{\textcolor{red}{``sorry, I was wrong to talk about the cone of positive semidefinite matrices, we just need to make sure that that the real part of the variety defined by the likelihood equations is positive dimensional, i.e. $c_{11} - t^2  \geq 0$ and  $c_{22} - \frac{c_{12}^2}{t^2} \geq 0$, furthermore, the statistically relevant solutions occur when these inequalities are strict.''}}}

The isolated points in $V_{2,1,C_{yy}}$ also can be described analytically:
\begin{alignat*}{4}
  &\beta_{11} &&= 0   \qquad\qquad && \beta_{12} &&= 0 \\
  &\tau_1     &&= \pm \sqrt{c_{11}} \qquad\qquad && \tau_2 &&= \pm \sqrt{c_{22}}
\end{alignat*}
In summary, we have proved the following result.

\begin{theorem}
\label{thm:factoranalysis21} 
Suppose that $p=2$ and $q=1$. For a generic sample covariance matrix $C_{yy}$, the likelihood equations for the factor analysis model in Section \ref{factoranalysis} have an infinite number of real solutions.
%\marginpar{real, and statistically feasible solutions.}
\end{theorem}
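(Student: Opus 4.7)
The plan is to show that the system of likelihood equations, when $p=2$ and $q=1$, contains a positive-dimensional real variety of feasible solutions, and then to exhibit an explicit real parametrization of a one-parameter family of solutions. Since $q=1$, the correlation matrix reduces to the scalar $R=1$, so $\tau^2+\beta' R\beta$ is the rank-one perturbation $\tau^2+\beta'\beta$, a $2\times 2$ symmetric matrix with only three independent entries $x_{11}=\tau_1^2+\beta_{11}^2$, $x_{12}=\beta_{11}\beta_{12}$, $x_{22}=\tau_2^2+\beta_{12}^2$. The model is therefore overparametrized by one dimension relative to the $2\times 2$ covariance, so one expects a one-parameter family of fibers over each value of $\Sigma$.

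The first step I would take is to observe, from the derivative formulas computed in Section~3, that when $\Sigma:=\tau^2+\beta'R\beta$ equals $C_{yy}$, then $I_p-\Sigma^{-1}C_{yy}=0$, and a direct substitution shows that every partial derivative $\partial f/\partial\tau_k$, $\partial f/\partial\beta_{kl}$, and $\partial f/\partial r_{kl}$ vanishes. (This is the classical fact that an unconstrained covariance MLE equals the sample covariance.) Hence every $(\tau,\beta)$ satisfying the three equations $\tau_1^2+\beta_{11}^2=c_{11}$, $\beta_{11}\beta_{12}=c_{12}$, $\tau_2^2+\beta_{12}^2=c_{22}$ is a critical point of $f$. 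This gives an ideal-theoretic containment that matches the primary component $J_1$ identified by \texttt{Macaulay2} in the excerpt, and supplies a human-verifiable justification for why that component lies in the vanishing locus.

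Next, I would construct the explicit real curve by setting $\beta_{11}=t$ as a free parameter and solving the three equations above, which gives exactly \eqref{eq:onedim1}--\eqref{eq:onedim2}. The crucial step is to verify that this parametrization produces genuinely real, feasible values of $\tau_1$ and $\tau_2$ on a set of positive measure of $t$. Feasibility requires $c_{11}-t^2\ge 0$ and $c_{22}-c_{12}^2/t^2\ge 0$, i.e.\ $c_{12}^2/c_{22}\le t^2\le c_{11}$. Since $C_{yy}$ is positive definite, $\det C_{yy}=c_{11}c_{22}-c_{12}^2>0$, so $c_{12}^2/c_{22}<c_{11}$ and this interval has positive length. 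Thus $V_{2,1,C_{yy}}$ contains uncountably many real points.

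The main obstacle I expect is not the existence argument but rather separating the \emph{generic} behavior from degenerate sample covariances: for a non-generic $C_{yy}$ (for instance with $c_{12}=0$ or a singular $C_{yy}$) the curve may degenerate or merge with the isolated component $V(J_2)=\{\beta=0,\ \tau_i^2=c_{ii}\}$, and one must argue that on the open dense set of positive-definite $C_{yy}$ the two components remain distinct and the curve retains positive dimension. I would handle this by noting that the defining relations of $J_1$ specialize, under a generic choice of $c_{ij}$, to an irreducible real curve of dimension one in parameter space (as checked symbolically by the \texttt{Macaulay2}/\texttt{PHCpack} decomposition), and the interval condition $c_{12}^2/c_{22}\le t^2\le c_{11}$ certifies that infinitely many of its complex points are actually real and feasible. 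Combining these observations yields the theorem.
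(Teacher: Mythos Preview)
Your proposal is correct and reaches the same explicit parametrization \eqref{eq:onedim1}--\eqref{eq:onedim2} and the same positive-measure interval argument that the paper uses. The route you take to get there, however, is genuinely different. The paper establishes that the curve lies in the critical locus by computing the primary decomposition $I_{2,1}=J_1\cap J_2$ in \texttt{Macaulay2}, reading off the dimensions of $V(J_1)$ and $V(J_2)$, and then slicing by a generic choice of $c_{ij}$; only afterwards does it recognise (in the Remark following Theorem~\ref{thm:factoranalysispq}) that the curve is exactly the fiber $\{\tau^2+\beta'\beta=C_{yy}\}$, while cautioning that this identification is not ``immediate'' because the partial derivatives are not functions of $\Sigma=\tau^2+\beta'\beta$ alone. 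Your argument supplies precisely the elementary verification the paper stops short of: every partial derivative of $f$ has the form $\tr\!\big[\Sigma^{-1}(I-C_{yy}\Sigma^{-1})\,M\big]$ for some matrix $M$ depending on the individual parameters, so the factor $I-C_{yy}\Sigma^{-1}$ kills the gradient whenever $\Sigma=C_{yy}$, regardless of what $M$ is. This is shorter and avoids computer algebra entirely, and in fact proves the statement for \emph{every} positive-definite $C_{yy}$, not merely generic ones. What you give up relative to the paper is completeness: the \texttt{Macaulay2} decomposition certifies that the curve together with the four isolated points $\beta=0,\ \tau_i=\pm\sqrt{c_{ii}}$ exhaust the critical locus, whereas your argument only exhibits the curve. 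For the theorem as stated this is immaterial, since infinitely many real solutions is all that is claimed.
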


For $p \geq 2$ and $q \geq 1$, we can apply the same approach as in the case $(p,q)=(2,1)$ to construct a sample covariance matrix $C_{yy}$ such that $V_{p,q, C_{yy}}$ is positive dimensional.

\begin{theorem}
\label{thm:factoranalysispq}
For all $p \geq 2$ and $q \geq 1$, there exists a covariance matrix $C_{yy}$ such that $V_{p,q, C_{yy}}$ is positive dimensional. 
\end{theorem}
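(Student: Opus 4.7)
The plan is to embed the construction from Theorem \ref{thm:factoranalysis21} into the larger parameter space by choosing a $C_{yy}$ whose block structure isolates a $2\times 2$ principal submatrix on which the $(2,1)$ curve can be reproduced. The underlying observation is that every $(\tau^2,\beta,R)$ with $\tau^2+\beta'R\beta = C_{yy}$ is automatically a critical point of $f$: the gradient of $g(\Sigma):=\log\det\Sigma + \tr C_{yy}\Sigma^{-1}$ on the cone of positive definite $\Sigma$ equals $\Sigma^{-1} - \Sigma^{-1}C_{yy}\Sigma^{-1}$, which vanishes exactly at $\Sigma = C_{yy}$, so by the chain rule the $\tau$-, $\beta$- and $R$-derivatives of $f$ derived in Section \ref{derivatives} all vanish whenever $\tau^2+\beta'R\beta = C_{yy}$. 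It therefore suffices to exhibit, for some $C_{yy}$, a positive-dimensional family of such triples with $\tau_j^2>0$.

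Concretely, I would take $C_{yy}$ to be the $p\times p$ matrix obtained from $I_p$ by replacing its upper-left $2\times 2$ block by a positive definite matrix having diagonal entries $c_{11}, c_{22}$ and off-diagonal entry $c_{12}$, where $c_{12}\neq 0$ and $c_{12}^2 < c_{11}c_{22}$. I would then specialize to parameters with $\beta_{mj}=0$ whenever $(m,j)\notin\{(1,1),(1,2)\}$ and with $\tau_j=1$ for $j\geq 3$. For such $\beta$, the product $\beta'R\beta$ depends on $R$ only through the fixed diagonal entry $r_{11}=1$, is supported in the upper-left $2\times 2$ block, and equals the rank-one matrix $(\beta_{1i}\beta_{1j})_{i,j=1,2}$ there. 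Setting $\beta_{11}=t$, $\beta_{12}=c_{12}/t$, $\tau_1^2 = c_{11}-t^2$, and $\tau_2^2 = c_{22}-c_{12}^2/t^2$, exactly as in \eqref{eq:onedim1}--\eqref{eq:onedim2}, then gives $\tau^2+\beta'R\beta = C_{yy}$ identically and reproduces the $(2,1)$ curve on the isolated block.

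The admissible interval $\{t\in\mathbb R : c_{12}^2/c_{22} < t^2 < c_{11}\}$ has positive measure since the $2\times 2$ block is positive definite, so this construction already produces a one-parameter family of critical points when $q=1$; it embeds into $V_{p,q,C_{yy}}$ after appending $\gamma = 1/\det C_{yy}$ and the analogues of the auxiliary $x$-variables used in Section \ref{solutionset}. When $q\geq 2$, the off-diagonal correlations $r_{ij}$, $1\leq i < j \leq q$, do not appear in $\beta'R\beta$ for our choice of $\beta$ and contribute a further $q(q-1)/2$ free parameters, subject only to the open condition that $R$ remain positive definite. I do not foresee a serious obstacle: once the chain-rule reduction above is in hand, the remaining step is the direct verification that $\tau^2+\beta'R\beta = C_{yy}$ on the proposed family, which is a block-wise reinterpretation of the computation already carried out for the $(2,1)$ case in Section \ref{solutionset}.
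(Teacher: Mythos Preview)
Your argument is correct and follows the same blueprint as the paper: choose $C_{yy}$ with a distinguished $2\times 2$ block and reproduce the curve \eqref{eq:onedim1}--\eqref{eq:onedim2} in that block while freezing the remaining $\beta_{kl}$ at zero and the remaining $\tau_k^2$ at $c_{kk}$. The paper simply asserts that the resulting set lies in $V_{p,q,C_{yy}}$ and fixes $r_{kl}=0$; your chain-rule reduction via $\nabla_\Sigma(\log\det\Sigma+\tr C_{yy}\Sigma^{-1})=\Sigma^{-1}-\Sigma^{-1}C_{yy}\Sigma^{-1}$ supplies the justification the paper omits (and which its own Remark flags as not immediate), and your observation that the off-diagonal $r_{ij}$ are then free yields an even larger family when $q\ge 2$.
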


\begin{proof}  Let $C_{yy}$ be a positive definite matrix of the following form:
$$
C_{yy} = \left(\begin{array}{ccccc}c_{11} & c_{12} & 0 & \cdots & 0 \\c_{12} & c_{22} & 0 & \cdots & 0 \\0 & 0 & c_{33} & \cdots & 0 \\\vdots & \vdots & \vdots & \ddots & \vdots \\0 & 0 & 0 & \cdots & c_{pp}\end{array}\right).
$$
The set of all $(\tau,\beta, R)$ such that $r_{kl}=0$ for $1 \leq k < l \leq q$ and
$$
C_{yy} = \tau^2+ \beta'\beta
$$
is a one-dimensional subset of $V_{p,q, C_{yy}}$.  Indeed, this set is obtained by solving for $\beta_{11}, \beta_{12}, \tau_1, \tau_2$ in equations \eqref{eq:onedim1}-\eqref{eq:onedim2} and then setting $\beta_{kl}=0$ for $(k,l) \neq (1,1), (1,2)$ and $\tau_k= \pm \sqrt{c_{kk}}$.
\end{proof}

\begin{remark}  
We note that the set of solutions in Theorem \ref{thm:factoranalysis21} is the set of solutions to the equation 
\begin{equation}
\label{eq: remark}
C_{yy} = \tau^2 + \beta' \beta.
\end{equation}
Since $f(\tau^2,\beta,R)$ is a function of $\tau^2 + \beta'\beta$ for the case in which $(p,q) = (2,1)$ then the likelihood function is constant on $V_{2,1,C_{yy}}$.  Thus, this set of solutions forms a ridge on the likelihood hypersurface.  Although it follows immediately from \eqref{eq: loglikelihood} that the log-likelihood function attains the same value at every solution to \eqref{eq: remark}, one cannot similarly conclude immediately that these solutions are critical points, the reason being that the first partial derivatives of $f(\tau^2,\beta,R)$ are not functions of $\tau^2 + \beta' \beta$.
%Although it follows immediately from \eqref{eq: loglikelihood} that the log-likelihood function attains the same value at every solution to \eqref{eq: remark}, it is not clear that all such solutions are critical points, the reason being that the first partial derivatives of $f(\tau^2,\beta,R)$ are not functions of $\tau^2 + \beta' \beta$.
\end{remark}

While Theorem~\ref{thm:factoranalysispq} describes some sample covariance matrices that result in an infinite number of solutions to the likelihood equations, we believe that there are more.  However, once we increase $p$ and $q$, even modestly, the computations become infeasible with current methodology.  One approach would be to use numerical algebraic geometry, but such approaches would require creative manipulation of the system.

\bigskip

%I found the following comment in the file: {\color{red}``Note the curve of infinitely many solutions \emph{contains} the points that capture non-identifiability of model parameters. But, there may be more points in this feasible solution set.''}   Should we revise and insert it somewhere in the paper?

\section*{Acknowledgments}
%People? Grants?

%\textcolor{red}{sonja has no grants on this type of work yet.} 
%To Aleksandra for bringing the Donner party data to my attention. To Daniel Brake for the {\tt bertini\_real} illustrations. To Caroline for discussions on the correlation matrix problem.

The research of Gross was supported by the National Science Foundation award DMS-1304167 and DMS-1620109.  The research of Richards was partially supported by the National Science Foundation under grant DMS-1309808.  The research of Stasi and Petrovi\'c was partially supported by Air Force Office for Scientific Research grant FA9550-14-1-0141. We thank Daniel Brake for helping us to visualize the curve in Theorem 4.1 using \verb|bertini_real| \cite{BertiniReal}.

%\medskip

%\bibliographystyle{abbrvnat}
%\bibliography{algstatnew}

\end{document}